\newcounter{foo}
\newfont{\blb}{msbm10 scaled\magstep1}
\newfont{\comp}{cmr12 scaled\magstep1}
\newfont{\compb}{cmr10 scaled\magstep2}
\newfont{\sbb}{cmssbx10 scaled\magstep3}
\newfont{\sbbb}{cmssbx10 scaled\magstep5}
\newfont{\sbs}{cmssbx10 scaled\magstep1}
\newtheorem{theorem}{Theorem}
\newtheorem{lemma}{Lemma}
\newtheorem{proposition}{Proposition}
\newenvironment{proof}{{\bf Proof.}}{\hfill{ }\vrule height10pt width5pt depth1pt}
\newtheorem{conjecture}[foo]{Conjecture}
\title{Ramsey numbers and the Zarankiewicz problem}
\author{David Conlon\thanks{Department of Mathematics, California Institute of Technology, Pasadena, CA 91125, USA. Email: {\tt dconlon@caltech.edu}. Research supported by NSF grant DMS-2054452.}
\and
Sam Mattheus\thanks{Department of Mathematics, Vrije Universiteit Brussel, Brussels, Belgium.  E-mail: {\tt sam.mattheus@vub.be}. Research as a Visiting Scholar at UCSD supported by a Fulbright Visiting Scholar Fellowship and a
    Fellowship of the Belgian American Foundation.}
\and
Dhruv Mubayi\thanks{Department of Mathematics, Statistics and Computer Science, University of Illinois Chicago, Chicago, IL 60607, USA. E-mail: {\tt mubayi@uic.edu}. Research supported by NSF grants
DMS-1763317, DMS-1952767, DMS-2153576, a Humboldt Research Award and a Simons Fellowship.}
\and
Jacques Verstra\"ete\thanks{Department of Mathematics, University of California, San Diego, La Jolla, CA 92093, USA. E-mail: {\tt jacques@ucsd.edu}. Research supported by NSF grant DMS-1952786.}
}
\date{}
\begin{document}

\maketitle

\vspace{-0.3in}

\begin{abstract}
Building on recent work of Mattheus and Verstra\"ete, we establish a general connection between Ramsey numbers of the form $r(F,t)$ for $F$ a fixed graph 
and a variant of the Zarankiewicz problem 
asking for the maximum number  of 1s in an $m$ by $n$ $0/1$-matrix that does not have any matrix from a fixed finite family $\mathcal{L}(F)$ derived from $F$ as a submatrix.
As an application, we give new lower bounds for the Ramsey numbers $r(C_5,t)$ and $r(C_7,t)$, namely, 
$r(C_5,t) = \tilde\Omega(t^{\frac{10}{7}})$ and $r(C_7,t) = \tilde\Omega(t^{\frac{5}{4}})$. 
We also show how the truth of a plausible conjecture about Zarankiewicz numbers would allow an approximate determination of $r(C_{2\ell+1}, t)$ for any fixed integer $\ell \geq 2$.
\end{abstract}

\section{Introduction}

For a graph $F$ and an integer $t \geq 2$, let $r(F,t)$ denote the minimum $n$ such that every $n$-vertex $F$-free graph contains an independent set of order $t$. In this note, we will be concerned with the case where $F$ is fixed and $t \rightarrow \infty$. Basic estimates on such {\em $F$-complete Ramsey numbers} go back to foundational work of Erd\H{o}s and Szekeres~\cite{ES} in the 1930s, though the asymptotics are understood in very few cases. Very recently, answering a longstanding question in the area, Mattheus and Verstra\"ete~\cite{MattheusV} showed that $r(K_4,t) = \tilde\Omega(t^3)$, which is tight up to the logarithmic factors hidden in the big O notation.  
Here, building on their approach, we establish a general connection between the Ramsey numbers $r(F, t)$ and a variant of the Zarankiewicz problem~\cite{Z} and use it to improve the lower bounds on certain cycle-complete Ramsey numbers.

\medskip

{\bf Main Result.}  To describe the connection between Ramsey numbers and the Zarankiewicz problem, suppose $J$ is a bipartite graph with parts $U$ and $V$. The {\em incidence matrix} $A = A(J)$ of $J$ is the matrix with rows indexed by $U$ and columns indexed by $V$ where $A_{uv} = 1$ if $uv \in E(J)$ and $A_{uv} = 0$ otherwise. The {\em Zarankiewicz number} $z(m,n,A)$ is then the maximum number of 1s in an $m$ by $n$ 0/1-matrix that does not have $A$ as a submatrix. In graph-theoretic terms, this is the maximum number of edges in an $m$ by $n$ bipartite graph which does not contain a copy of $J$ with $U$ in the part of order $m$ and $V$ in the part of order $n$. 

Given a graph $F$, let $\mathcal{E}(F)$ be the set of sequences of edge-disjoint bipartite subgraphs $H_1,H_2,\dots,H_k$ of $F$, each containing at least one edge, such that the $E(H_i)$ partition $E(F)$ and $|V(H_i) \cap V(H_j)| \leq 1$ for all $1 \leq i < j \leq k$. Given $H = (H_1,H_2,\dots,H_k) \in \mathcal{E}(F)$,
let $J = J(H)$ be the bipartite graph with parts $[k]$ and $V(F)$ where $i \in [k]$ is adjacent in $J$ to every vertex in $V(H_i) \subseteq V(F)$. For instance, if $k =|E(F)|$, then $H$ is an ordering of $E(F)$ and $J(H)$ is the so-called {\em 1-subdivision} of $F$.
We then let $\mathcal{L}(F) = \{J(H) : H \in \mathcal{E}(F)\} \cup \{C_4\}$. Abusing notation, we will also use $\mathcal{L}(F)$ to denote the collection of incidence matrices of the graphs in $\mathcal{L}(F)$.  

Our main result connects the problem of estimating the Ramsey number $r(F, t)$ to that of estimating the Zarankiewicz number $z(m,n,\mathcal{L}(F))$, the maximum number of $1$s in an $m$ by $n$ $0/1$-matrix which does not contain 
any member of $\mathcal{L}(F)$ as a submatrix. To state the result, it will be convenient to define an {\em $(m,n,a,b)$-graph} to be an $m$ by $n$ bipartite graph with $n \geq m$ such that all vertices in the part of size $n$ have degree $a$ and all vertices in the part of size $m$ have degree $b \geq a$. Here and throughout, unless otherwise indicated, $\log$ will be taken to the base $2$.

\begin{theorem}\label{main}
Let $F$ be a graph and let $a,b,m,n$ be positive integers with $a \geq 2^{12}(\log n)^3$ such that there exists an $\mathcal{L}(F)$-free $(m,n,a,b)$-graph. If  $t = 2^8n(\log n)^2/ab$, then 
\begin{equation*}
r(F,t) = \Omega\left(\frac{bt}{\log n}\right).
\end{equation*}
\end{theorem}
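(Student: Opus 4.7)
The plan is to construct, with positive probability, an $F$-free graph on $\Omega(bt/\log n)$ vertices whose independence number is less than $t$, following the Mattheus--Verstra\"ete template. The construction has two layers of randomness: a random vertex subset $V^{*}\subseteq V$ and, for each $u\in U$, a random bipartition of $N(u)$. I will include each $v\in V$ in $V^{*}$ independently with probability $p\asymp \log n/a$, so that, by concentration, $|V^{*}|\ge bt/\log n$ with high probability. Independently, for each $u\in U$, I pick a uniformly random labelling $\phi_u\colon N(u)\to\{0,1\}$. The graph $\Gamma^{*}$ on $V^{*}$ is then defined by placing an edge between $v,v'\in V^{*}$ iff some $u\in U$ satisfies $\{v,v'\}\subseteq N(u)$ and $\phi_u(v)\neq\phi_u(v')$.

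The $F$-freeness of $\Gamma^{*}$ will be deterministic once the labellings $\phi_u$ are fixed. Suppose for contradiction $F\subseteq\Gamma^{*}$. By the $C_4$-freeness of $G$ (implicit in $\mathcal L(F)$-freeness), each edge $e=vv'$ of $F$ has a unique witness $u(e)\in U$ with $\{v,v'\}\subseteq N(u(e))$. Partitioning $E(F)$ into $E(H_1)\sqcup\cdots\sqcup E(H_k)$ according to the value of $u(e)$ produces distinct witnesses $u_1,\dots,u_k\in U$. Each $H_i$ is bipartite, because its edges only cross the bipartition of $N(u_i)$ induced by $\phi_{u_i}$; and for $i\neq j$ we have $|V(H_i)\cap V(H_j)|\le |N(u_i)\cap N(u_j)|\le 1$ by $C_4$-freeness. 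Therefore $(H_1,\dots,H_k)\in \mathcal E(F)$, and the map $i\mapsto u_i$ embeds $J(H)$ into $G$, contradicting $\mathcal L(F)$-freeness.

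To control the independence number, for $T\subseteq V$ of size $t$ let $s_u(T)=|T\cap N(u)|$ and $M(T)=|\{u:s_u\ge 1\}|$. Then, conditional on $T\subseteq V^{*}$,
\[
\Pr\!\left[T\text{ is independent in }\Gamma^{*}\right]=\prod_{u\,:\,s_u\ge 1}2^{1-s_u}=2^{M(T)-at},
\]
so the expected number of independent $t$-sets in $\Gamma^{*}$ equals $p^{t}\,2^{-at}\sum_{|T|=t}2^{M(T)}$. The key estimate is the bound on this sum. Writing $2^{M(T)}=\sum_{S\subseteq U}\mathbf{1}[S\subseteq N_U(T)]$ and swapping the order of summation gives $\sum_{S\subseteq U}|\{T:|T|=t,\ T\cap N(u)\neq\emptyset\text{ for all }u\in S\}|$; the $C_4$-freeness of $G$ (through $|\bigcup_{u\in S}N(u)|\ge b|S|-\binom{|S|}{2}$, together with the observation that each $v\in V$ belongs to exactly $a$ sets $N(u)$) restricts the transversal count for each fixed $S$. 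Balancing $p\asymp\log n/a$ against this count will force the expected count to be $o(1)$, after which a standard alteration deletes one vertex per bad independent $t$-set and yields an $F$-free graph on $\Omega(bt/\log n)$ vertices with $\alpha<t$, giving $r(F,t)=\Omega(bt/\log n)$. The principal obstacle will be precisely this estimate for $\sum_T 2^{M(T)}$: in the regime $a\ge 2^{12}(\log n)^{3}$ one has $m\gg at$, so the crude bound $2^{M(T)}\le 2^{m}$ is far too weak, and finer use of the $\mathcal L(F)$-free structure of $G$ is needed to show that $t$-subsets with $M(T)$ close to $at$---those behaving like independent sets in the codegree graph on $V$---are sparse enough for the subsampling factor $p^{t}$ to absorb them.
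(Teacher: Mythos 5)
Your construction is the right one (the random bipartitions of the neighborhoods $N(u)$ are exactly the paper's random spanning bipartite subgraphs of the cliques of the distance-two graph, and your $p\asymp \log n/a$ matches the paper's sampling rate up to constants), and your $F$-freeness argument via the decomposition $(H_1,\dots,H_k)\in\mathcal E(F)$ is correct and complete. But the control of the independence number --- which is the entire content of the theorem --- is missing. You reduce it to bounding $\sum_{|T|=t}2^{M(T)}$ and then state that this estimate is ``the principal obstacle'' requiring ``finer use of the $\mathcal L(F)$-free structure.'' That is not a proof of the key step; it is a restatement of the problem. Worse, the first-moment route you propose cannot be closed by a cleverer computation of the type you sketch: note that $M(T)=at$ exactly when $T$ is independent in the distance-two graph $H$ of $G$ restricted to $V$, and for such $T$ your conditional probability is $1$. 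So $p^{t}2^{-at}\sum_T 2^{M(T)}$ is at least $p^{t}$ times the number of independent $t$-sets of $H$, and bounding \emph{that} count is itself a container-type problem --- the inclusion--exclusion over $S\subseteq U$ and the codegree bound $|N(u_i)\cap N(u_j)|\le 1$ do not suffice, since they say nothing about how many $t$-sets hit each relevant neighborhood at most once.

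The paper overcomes precisely this obstacle with two ingredients you do not supply: (i) a supersaturation statement (Lemma~\ref{mainlem}, proved by a dyadic decomposition of the cliques by their intersection with $X$, convexity, and the Hoeffding--Azuma inequality) showing that with positive probability \emph{every} set $X$ of size at least $R=2^{10}m\log n/a$ spans at least $(a^2/2^8m)|X|^2$ edges of $H^*$; and (ii) the Kleitman--Winston/container-type counting result of Kohayakawa, Lee, R\"odl and Samotij (Proposition~\ref{count}), which converts that supersaturation into the bound $\binom{n}{r}\binom{R}{t-r}$, $r=t/\log n$, on the number of independent $t$-sets of $H^*$ --- a bound small enough for the $p^t$ factor and the deletion step to absorb. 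This is also where the hypotheses $a\ge 2^{12}(\log n)^3$ and $t=2^8n(\log n)^2/ab$ are actually used; your write-up never engages with them. Until you prove a substitute for (i) and (ii), the argument does not establish the theorem.
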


The logarithmic factors in Theorem~\ref{main} can be improved for many choices of the parameters, but, since the improved bound is also unlikely to give the correct exponent on these factors, we opted to give a simpler proof with slightly worse bounds. Though perhaps not a surprise, since our methods are based on those of~\cite{MattheusV}, we also note that Theorem~\ref{main} gives the bound $r(K_4,t) = \tilde\Omega(t^3)$ if we make use of the $\mathcal{L}(K_4)$-free graph with parameters $(m,n,a,b) = (q^3+1, q^4-q^3+q^2, q+1, q^2)$ as in~\cite{MattheusV}. Moreover, if some natural conjectures on $z(m,n,\mathcal{L}({K_s}))$ are true, then the theorem also implies that $r(K_s,t) = \tilde\Omega(t^{s-1})$ for all $s \ge 4$.
We will return to these points in the concluding remarks.

\medskip

{\bf Cycle-complete Ramsey numbers.} Our main new applications of Theorem~\ref{main} are to the estimation of the cycle-complete Ramsey numbers $r(C_k, t)$. The study of such quantities goes back to works of Bondy and Erd\H{o}s~\cite{BE73} and  Erd\H{o}s, Faudree, Rousseau and Schelp~\cite{EFRS} in the 1970s. In general, the best known upper bounds~\cite{CLRZ, Sudakov} for $r(C_k, t)$ for fixed $k \geq 2$ are that $r(C_{2\ell},t) = \tilde{O}(t^{\ell/(\ell - 1)})$ and $r(C_{2\ell + 1},t) = \tilde{O}(t^{(\ell+1)/\ell})$, where the logarithmic factors hidden inside the big O notation have exponents  
%implicit constants are polylogarithmic in $t$ with a power 
depending only on $\ell$. Up until recently, the lower bounds have always come from applications of the probabilistic method, with the best result, proved by Bohman and Keevash~\cite{BK10} using the $C_k$-free process, saying that, for fixed $k \ge 3$,
\[ r(C_k,t) = \Omega\left(\frac{t^{\frac{k - 1}{k - 2}}}{\log t}\right).\]
That this is optimal when $k = 3$ is an older result, with the lower bound due to Kim~\cite{K} and the upper bound to Ajtai, Koml\'{o}s and Szemer\'{e}di~\cite{AKS} (though see also~\cite{BK21, FGM, Sh} for considerable further work on determining the constant factors).

In recent work, Mubayi and Verstra\"ete~\cite{MV} used pseudorandom graphs to improve the bound above by a power of log 
%give a polylogarithmic improvement over the bound above 
for all odd $k \geq 5$ and some even $k$. For instance, for odd $k \ge 3$, they showed that
\[ r(C_k,t) = \Omega\left(\frac{t^{\frac{k - 1}{k - 2}}}{(\log t)^{\frac{2}{k-2}}}\right).\]

Moreover, using a different approach more closely allied to that of this paper, they were able to give polynomial improvements on $r(C_5,t)$ and $r(C_7,t)$, namely, $r(C_5,t) = \Omega(t^{\frac{11}{8}})$ and $r(C_7,t) = \Omega(t^{\frac{11}{9}})$. 
Using Theorem~\ref{main}, we improve these bounds even further. 

\begin{theorem} \label{c5c7}
As $t \rightarrow \infty$,
\[r(C_5,t) = \Omega\left(\frac{t^{\frac{10}{7}}}{(\log t)^{\frac{13}{7}}}\right) \qquad \hbox{ and } \qquad 
r(C_7,t) = \Omega\left(\frac{t^{\frac{5}{4}}}{(\log t)^{\frac{3}{2}}}\right).
\]
\end{theorem}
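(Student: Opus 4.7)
The plan is to apply Theorem~\ref{main} with $F = C_5$ and $F = C_7$, using incidence graphs of generalized polygons as the $\mathcal{L}(F)$-free host graphs. The key observation, which sidesteps any case analysis of the individual members of $\mathcal{L}(C_k)$, is that \emph{every bipartite graph of girth at least $2k+2$ is $\mathcal{L}(C_k)$-free} (for odd $k$, which is the only case we need). To see this, fix $H = (H_1,\ldots,H_j) \in \mathcal{E}(C_k)$, write $C_k = v_1 v_2 \cdots v_k v_1$, and let $H_{i_\ell}$ be the piece containing the edge $v_\ell v_{\ell+1}$. Then
\[v_1 \to H_{i_1} \to v_2 \to H_{i_2} \to \cdots \to v_k \to H_{i_k} \to v_1\]
is a closed walk of length $2k$ in $J(H)$. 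Since $C_k$ is not bipartite we have $j \geq 2$, so the cyclic index sequence $(i_\ell)$ has some transition $i_{\ell-1} \neq i_\ell$; a direct check shows that the edge $\{v_\ell, H_{i_\ell}\}$ is then traversed exactly once in the walk, so the walk contains a cycle of even length at most $2k$. Hence $J(H)$ cannot embed into any graph of girth $> 2k$, and $C_4 \in \mathcal{L}(C_k)$ is excluded for the same reason.

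For $C_5$, take the point--line incidence graph of the twisted triality hexagon $T(q, q^3)$, which exists for every prime power $q$ and has girth $12$. Its parameters are
\[(m, n, a, b) = \bigl((q+1)(q^8+q^4+1),\; (q^3+1)(q^8+q^4+1),\; q+1,\; q^3+1\bigr),\]
and the observation above shows it is $\mathcal{L}(C_5)$-free. Plugging into Theorem~\ref{main} gives $t = 2^8 n (\log n)^2/(ab) = \tilde{\Theta}(q^7)$, so $q = \tilde{\Theta}(t^{1/7})$, and then $r(C_5,t) = \Omega(bt/\log n) = \Omega(q^3 t / \log t) = \Omega(t^{10/7}/(\log t)^{13/7})$.

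For $C_7$, use the Ree--Tits generalized octagon of order $(q, q^2)$, defined when $q = 2^{2e+1}$ and of girth $16$. Its parameters are
\[(m, n, a, b) = \bigl((q+1)(q^3+1)(q^6+1),\; (q^2+1)(q^3+1)(q^6+1),\; q+1,\; q^2+1\bigr),\]
and $\mathcal{L}(C_7)$-freeness follows from the same girth observation. Theorem~\ref{main} now yields $t = \tilde{\Theta}(q^8)$ and $r(C_7, t) = \Omega(q^2 t / \log t) = \Omega(t^{5/4}/(\log t)^{3/2})$. Since the admissible values of $q$ (odd powers of $2$) are only geometrically spaced, one interpolates to general $t$ by rounding $q$ down to the largest admissible value, losing only a bounded constant factor.

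I do not foresee any genuine obstacle beyond the closed-walk observation; once that is in place, the rest is mechanical. The two routine points to check are the side condition $a \geq 2^{12}(\log n)^3$ of Theorem~\ref{main} (automatic since $a = q+1$ and $n = q^{O(1)}$) and the precise exponents of $\log t$, which come out of direct substitution of $q = \tilde{\Theta}(t^{1/7})$ and $q = \tilde{\Theta}(t^{1/8})$, respectively.
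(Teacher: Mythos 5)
Your proposal is correct and follows essentially the same route as the paper: the same key observation that girth larger than $4\ell+2$ forces $\mathcal{L}(C_{2\ell+1})$-freeness (which you prove in more detail via the closed-walk argument), the same generalized hexagons of order $(q,q^3)$ and octagons of order $(q,q^2)$, and the same substitution into Theorem~\ref{main} — the only cosmetic difference being that you plug the parameters in directly rather than passing through the intermediate Theorem~\ref{cycles}. The parameter checks, the exponents of $\log t$, and the interpolation to all $t$ all come out as in the paper.
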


To derive Theorem~\ref{c5c7} from Theorem~\ref{main}, we first note that an $m$ by $n$ bipartite graph of girth at least $4\ell + 4$
has $O((mn)^{\beta} + m + n)$ edges, where $\beta = (\ell + 1)/(2\ell + 1)$. This result was proved by Hoory\cite{Hoory}, though we refer the reader to the survey~\cite{V} for more on related earlier results. The following straightforward corollary of Theorem~\ref{main} now says that if one has a construction of an $m$ by $n$ bipartite graph of girth at least $4\ell + 4$ which comes close to meeting Hoory's upper bound, then one also has a good lower bound for $r(C_{2\ell+1},t)$.

\begin{theorem}\label{cycles}
Let $\ell \in \mathbb{N}$, $\alpha_0 \geq 1$ 
and $0 < \beta \leq (\ell + 1)/(2\ell + 1)$ be fixed. If there is an infinite sequence of $(m,n,a,b)$-graphs of girth larger than $4\ell + 2$ with $\Omega((mn)^{\beta})$ edges and $1 \leq \alpha = \log_a b \leq \alpha_0$, then
\[ r(C_{2\ell+1},t) = \Omega\left(\frac{t^{\frac{\alpha \beta - 3\beta + 2}{\alpha - 3\beta - \alpha \beta + 2}}}{(\log t)^{\frac{3\alpha \beta - \alpha - 3\beta + 2}{\alpha - 3\beta - \alpha \beta + 2}}}\right)
\]
for an infinite sequence of values of $t$.
\end{theorem}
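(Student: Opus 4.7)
The plan is to apply Theorem~\ref{main} with $F = C_{2\ell+1}$ to the given sequence of $(m,n,a,b)$-graphs, treated as $\mathcal{L}(C_{2\ell+1})$-free hosts. This decomposes into two tasks: first, verifying that girth $>4\ell+2$ automatically implies $\mathcal{L}(C_{2\ell+1})$-freeness, so that Theorem~\ref{main} applies; and second, converting the resulting estimate $r(C_{2\ell+1},t)=\Omega(bt/\log n)$ at the specific value $t \asymp n(\log n)^2/(ab)$ into one purely in $t$, by exploiting $na = \Omega((mn)^\beta)$ together with $b = a^\alpha$.

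For the first task, I claim every $J = J(H)\in \mathcal{L}(C_{2\ell+1})$ contains a cycle of length at most $4\ell+2$. Writing $C_{2\ell+1} = v_0 v_1\cdots v_{2\ell}v_0$ with edges $e_j = v_{j-1}v_j$, and letting $\pi(j)\in[k]$ be the unique index with $e_j\in E(H_{\pi(j)})$, the sequence
\[
W\colon v_0\to\pi(1)\to v_1\to\pi(2)\to\cdots\to v_{2\ell}\to\pi(2\ell+1)\to v_0
\]
is a closed walk in $J$ of length $4\ell+2$, since $\{v_{j-1},v_j\}\subseteq V(H_{\pi(j)})$ forces both to be adjacent to $\pi(j)$ in $J$. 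Because $C_{2\ell+1}$ is not bipartite while each $H_i$ is, the indices $\pi(j)$ cannot all coincide, so the vertex set of $W$ has at least $(2\ell+1)+2 = 2\ell+3$ elements. If the set of distinct edges of $W$ were a forest, each would be traversed at least twice, giving at most $(4\ell+2)/2 = 2\ell+1$ edges and at most $2\ell+2$ vertices, a contradiction. Hence $W$ witnesses a cycle in $J$ of length $\leq 4\ell+2$, so any bipartite graph of girth $>4\ell+2$ (which is $>4$ and thus also $C_4$-free) is $\mathcal{L}(C_{2\ell+1})$-free.

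For the second task, combining $na \geq c(mn)^\beta$ with $m = na/b$ and $b = a^\alpha$ rearranges to $a \geq c' n^\gamma$, where
\[
\gamma = \frac{2\beta - 1}{1 + (\alpha-1)\beta}.
\]
In the interesting regime $\gamma > 0$, the side condition $a \geq 2^{12}(\log n)^3$ of Theorem~\ref{main} holds for large $n$, and Theorem~\ref{main} applied to each graph in the sequence gives
\[
t \;\asymp\; \frac{n(\log n)^2}{a^{1+\alpha}}, \qquad r(C_{2\ell+1},t) \;=\; \Omega\!\left(\frac{n\log n}{a}\right).
\]
Setting $\epsilon = 1-\gamma$ and $\delta = 1-(1+\alpha)\gamma$ (both positive, since $\beta \leq (\ell+1)/(2\ell+1) < (2+\alpha)/(3+\alpha)$), a direct manipulation after substituting the expression for $t$ shows that
\[
\frac{n\log n}{a} \;\gtrsim\; \frac{t^{\epsilon/\delta}}{(\log t)^{2\epsilon/\delta - 1}}
\]
is equivalent to $a \gtrsim n^{\gamma}$, i.e.\ precisely to the hypothesis. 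Substituting $\gamma = (2\beta-1)/(1+(\alpha-1)\beta)$ into $\epsilon/\delta$ and $2\epsilon/\delta - 1$ and simplifying reproduces the exponents $(\alpha\beta-3\beta+2)/(\alpha-3\beta-\alpha\beta+2)$ and $(3\alpha\beta-\alpha-3\beta+2)/(\alpha-3\beta-\alpha\beta+2)$ in the statement. The only genuine obstacle is the cycle-avoidance claim in the first task; everything else reduces to routine algebra along the given sequence of graphs.
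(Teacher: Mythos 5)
Your proposal is correct and follows essentially the same route as the paper: reduce to Theorem~\ref{main} via the observation that girth larger than $4\ell+2$ forces $\mathcal{L}(C_{2\ell+1})$-freeness, then translate the bound $\Omega(bt/\log n)$ into powers of $t$ using $na=\Theta((mn)^\beta)$ and $b=a^\alpha$; your closed-walk argument correctly supplies the proof of the girth observation that the paper leaves as an ``observe,'' and your algebra (solving for $a\asymp n^\gamma$ rather than for $m$ in terms of $n$) is an equivalent reorganization yielding the same exponents. The only caveat is that you verify the hypothesis $a\ge 2^{12}(\log n)^3$ of Theorem~\ref{main} only when $\gamma>0$, i.e.\ $\beta>1/2$, but this is harmless since for $\beta\le 1/2$ the claimed bound is weaker than the trivial $r(C_{2\ell+1},t)\ge t-1$ (and the paper itself does not address this point).
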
 

To apply this theorem, we use the bipartite incidence graphs of sequences of generalized polygons~\cite{vM}. For instance, if we set $\ell = 1$ and $\beta = 2/3$, the value of $\alpha$ is unimportant and we can use any of the known sequences of generalized quadrangles to return a bound of the form $r(C_3, t) = \Omega(t^2/\log^3 t)$ along an infinite sequence of values of $t$. Theorem~\ref{c5c7} is a direct consequence of applying Theorem~\ref{cycles} to the bipartite incidence graphs of the generalized hexagons of order $(q,q^3)$ for the $C_5$-case and the generalized octagons of order $(q,q^2)$ for the $C_7$-case. Such generalized polygons are known to exist for appropriate prime powers $q$ and their bipartite incidence graphs are $\mathcal{L}(C_5)$-free and $\mathcal{L}(C_7)$-free, respectively. In the $C_5$-case, we may therefore apply Theorem \ref{cycles} with $\beta = 3/5$ and, since the bipartite incidence graph of a generalized hexagon of order $(q,q^3)$ has $a = q + 1$ and $b = q^3 + 1$, $\alpha = \log(q^3 + 1)/\log(q + 1)$. Similarly, for $C_7$, we may take $\beta = 4/7$ and $\alpha = \log(q^2 + 1)/\log(q + 1)$. This gives Theorem~\ref{c5c7} along certain infinite sequences of values of $t$, but these sequences are sufficiently dense that the same result carries over to all $t$. 

Though the celebrated Feit--Higman theorem~\cite{FH} precludes the existence of thick generalized polygons beyond generalized octagons and, even when they do exist, the possible orders of generalized polygons are heavily constrained, it may still be the case that sequences of graphs of the type required by Theorem~\ref{cycles} exist. We make a (tentative) conjecture to that effect.

\begin{conjecture} \label{cycleconj}
For every $\ell \in \mathbb{N}$ and $\alpha \geq 1$, there exists an infinite sequence of $(m,n,a,b)$-graphs of girth larger than $4\ell + 2$ with $\Omega((mn)^{(\ell + 1)/(2 \ell +1)})$ edges and $\log_a b \rightarrow \alpha$.
\end{conjecture}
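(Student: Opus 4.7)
The plan splits naturally according to $\ell$. For $\ell \in \{1,2,3\}$, thick generalized polygons already realize Hoory's bound $\Omega((mn)^{(\ell+1)/(2\ell+1)})$, but only at a sparse discrete set of values of $\alpha = \log_a b$ determined by the allowed orders $(q, q^s)$ (at most two nontrivial $s$ for each polygon type, by Feit--Higman). The first subgoal is therefore to fill in the continuum of $\alpha$. The approach I would try is to enrich the polygon family by algebraic covers: build a bipartite graph whose two sides come from the point and line sets of a polygon over $\mathbb{F}_q$ by pulling back via a norm or trace map to an extension $\mathbb{F}_{q^r}$, with incidences lifted from the base polygon. Such a cover should preserve the underlying incidence-geometric girth while adjusting $\log_a b$ by a factor depending on $r$ and the dimension of the fiber, and iterating or combining covers with tensor-type products should produce a dense set of limiting values of $\alpha$.

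For $\ell \geq 4$, the Feit--Higman theorem rules out any new thick polygon, and a genuinely different source of extremal graphs is needed. The natural fallback is a random bipartite graph $G(m,n,p)$ conditioned on girth $> 4\ell + 2$, or equivalently a random subgraph of a Lazebnik--Ustimenko-type base graph followed by the alteration method: delete one edge from each cycle of length at most $4\ell + 2$. Here one would calibrate $n/m$ so that $\log_a b \to \alpha$ after alteration, and verify that the expected edge count survives the deletions up to a constant factor. An algebraic variant would be to replace the underlying ground field with a polynomial identity defining an incidence structure, tuning the degree of the polynomial to control $\alpha$ and using a character-sum bound to control the number of short cycles.

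The main obstacle is that the random approach provably cannot reach Hoory's bound for any $\ell \geq 1$: a first-moment count of $C_{4\ell+2}$ forces $p = O((mn)^{-2\ell/(4\ell+1)})$, giving edge count $O((mn)^{(2\ell+1)/(4\ell+1)})$, strictly smaller than Hoory's exponent $(\ell+1)/(2\ell+1)$. Hence any successful proof must supply algebraic input that creates far fewer short cycles than a random graph, which is exactly what generalized polygons do for $\ell \leq 3$ and exactly what Feit--Higman obstructs for $\ell \geq 4$. This is why the authors flag the conjecture as tentative, and I expect that any serious progress will require either a new family of high-girth incidence geometries beyond polygons or a substantial advance in the extremal theory of bipartite graphs avoiding short cycles. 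A realistic near-term target would be to establish the conjecture in the regime $\ell \leq 3$ via the cover construction sketched above, and to approach $\ell \geq 4$ only after identifying a suitable algebraic substitute for generalized polygons.
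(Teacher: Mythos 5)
The statement you are addressing is a \emph{conjecture}: the paper does not prove it, and indeed flags it as tentative precisely because the Feit--Higman theorem removes the only known source of bipartite graphs meeting Hoory's bound once $\ell \geq 4$, and because even for $\ell \leq 3$ the generalized polygons realize only a handful of discrete values of $\alpha=\log_a b$ rather than all $\alpha \geq 1$. Your write-up correctly identifies both obstructions, and your first-moment calculation showing that a random bipartite graph plus alteration only reaches exponent $(2\ell+1)/(4\ell+1) < (\ell+1)/(2\ell+1)$ is sound; this matches the implicit reason the authors call the conjecture tentative. But what you have written is a research plan, not a proof, and it should not be presented as one.

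Concretely, the gaps are these. For $\ell \leq 3$, the ``algebraic cover'' construction is never defined: you do not specify the lifted point and line sets, the voltage or fiber structure, or the incidence relation, and you give no argument that the cover preserves girth. This last point is the crux --- covers of incidence graphs generically \emph{can} preserve or increase girth, but they also preserve the local degrees, so a cover of a polygon of order $(q,q^s)$ still has $\log_a b$ essentially equal to $s$; to move $\alpha$ continuously you would have to change the two degrees by different factors, which a covering map does not do. You would also need to verify biregularity and that the edge count still meets the exponent $(\ell+1)/(2\ell+1)$ after whatever modification achieves the degree imbalance; none of this is addressed. For $\ell \geq 4$ you candidly admit that your proposed method provably fails, so no proof is even proposed there. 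The honest conclusion is that the conjecture remains open in all cases, including $\ell \leq 3$ with $\alpha$ outside the finite set of exponents afforded by the known generalized polygons.
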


If such a sequence does exist for a given $\ell$ and $\alpha$, we can use Theorem~\ref{cycles} to conclude that 
\[r(C_{2\ell+1}, t) \geq t^{\frac{\alpha (\ell + 1) + \ell - 1}{\alpha \ell + \ell - 1} - o(1)}\]
along some infinite sequence of values of $t$. Since $(\alpha (\ell + 1) + \ell - 1)/(\alpha \ell + \ell - 1) \rightarrow (\ell+1)/\ell$ as $\alpha \rightarrow \infty$, taking $\alpha$ sufficiently large in terms of $\epsilon$ yields the following result.

\begin{theorem}
If Conjecture~\ref{cycleconj} holds for a fixed $\ell \in \mathbb{N}$ and a sequence of $\alpha$ tending to infinity, then, for any $\epsilon > 0$, 
\[r(C_{2\ell+1}, t) \geq t^{(\ell+1)/\ell - \epsilon}\]
along some infinite sequence of values of $t$.
\end{theorem}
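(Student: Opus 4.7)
The plan is to apply Theorem~\ref{cycles} directly, taking $\beta = (\ell+1)/(2\ell+1)$ and choosing $\alpha$ sufficiently large from the sequence hypothesized by Conjecture~\ref{cycleconj}. As a first step, I would simplify the exponent appearing in the conclusion of Theorem~\ref{cycles} under this choice of $\beta$. Clearing the common factor $2\ell+1$ from the numerator and denominator yields
\[
\frac{\alpha\beta - 3\beta + 2}{\alpha - 3\beta - \alpha\beta + 2} \;=\; \frac{\alpha(\ell+1) + \ell - 1}{\alpha\ell + \ell - 1},
\]
which tends to $(\ell+1)/\ell$ as $\alpha \to \infty$, with error of order $1/\alpha$.

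Next, given $\epsilon > 0$, I would select $\alpha$ from the hypothesized sequence large enough that the ratio above exceeds $(\ell+1)/\ell - \epsilon/2$; such an $\alpha$ exists precisely because the sequence of values of $\alpha$ for which Conjecture~\ref{cycleconj} is assumed to hold tends to infinity. Applying Theorem~\ref{cycles} to the family of $(m,n,a,b)$-graphs of girth larger than $4\ell+2$ supplied by Conjecture~\ref{cycleconj} at this fixed value of $\alpha$ then produces an infinite sequence of values of $t$ along which
\[
r(C_{2\ell+1}, t) \;=\; \Omega\!\left(\frac{t^{(\ell+1)/\ell - \epsilon/2}}{(\log t)^{C}}\right)
\]
for some constant $C = C(\ell,\alpha)$ extracted from the theorem.

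Finally, since $(\log t)^{-C} \geq t^{-\epsilon/2}$ for all sufficiently large $t$, the logarithmic factor is absorbed into an additional $t^{-\epsilon/2}$ loss in the exponent, giving $r(C_{2\ell+1}, t) \geq t^{(\ell+1)/\ell - \epsilon}$ along the same infinite sequence of values of $t$. There is no real obstacle in the argument: the theorem is essentially a corollary of Theorem~\ref{cycles} once the asymptotic analysis of the exponent is carried out, and the only point requiring mild care is fixing $\alpha$ large enough so that the polynomial gain $t^{-\epsilon/2}$ absorbs the logarithmic factor $(\log t)^{-C}$ whose exponent $C$ itself depends on the chosen $\alpha$.
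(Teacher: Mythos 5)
Your proposal is correct and follows essentially the same route as the paper: substitute $\beta = (\ell+1)/(2\ell+1)$ into the exponent of Theorem~\ref{cycles}, simplify it to $(\alpha(\ell+1)+\ell-1)/(\alpha\ell+\ell-1)$, let $\alpha$ be large enough from the hypothesized sequence, and absorb the (now fixed-$\alpha$-dependent) logarithmic factor into the $\epsilon$ loss in the exponent. The algebraic simplification and the order of quantifiers (fix $\alpha$ first, then let $t\to\infty$) are handled exactly as in the paper.
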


That is, if Conjecture~\ref{cycleconj} holds for a particular $\ell$ and a sequence of $\alpha$ tending to infinity, we can prove lower bounds which come arbitrarily close to matching the polynomial factor in the upper bound $r(C_{2\ell+1}, t) = \tilde{O}(t^{(\ell+1)/\ell})$. To us, this strongly suggests that $(\ell + 1)/\ell$ is the correct exponent.

\section{Proof of Theorem \ref{main}}

In this section, we prove Theorem~\ref{main}. When necessary, we assume that relevant quantities are integers without introducing rounding, as this does not change the statements of the results. For a multiset $\mathcal{S}$ 
of edge-disjoint cliques in a graph, let $v(\mathcal{S})$ denote the number of pairs $(x,T)$ with $T \in \mathcal{S}$ and $x \in V(T)$ and let $E(\mathcal{S}) = \bigcup_{T \in \mathcal{S}} E(T)$ and $e(\mathcal{S}) = |E(\mathcal{S})|$. The quantity $v(\mathcal{S})$ can also be interpreted as the number of edges in the bipartite graph with parts $\bigcup_{T \in \mathcal{S}} V(T)$  and $\mathcal{S}$ where $x \in \bigcup_{T \in \mathcal{S}} V(T)$ is adjacent to $T \in \mathcal{S}$ if and only if $x \in T$. 
The following simple lemma gives a lower bound on $e(\mathcal{S})$ in terms of $v(\mathcal{S})$ under appropriate conditions. 

\begin{lemma}\label{cliques}
If $\mathcal{S}$ is a multiset of edge-disjoint cliques in a graph with $v(\mathcal{S}) \geq 2|\mathcal{S}|$, then
\[ e(\mathcal{S}) \geq \frac{v(\mathcal{S})^2}{4|\mathcal{S}|}.\]
\end{lemma}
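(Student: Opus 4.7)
The plan is to exploit the edge-disjointness of the cliques to write $e(\mathcal{S})$ as a clean sum and then apply convexity. Write $|T|$ for the number of vertices of $T \in \mathcal{S}$. Because the cliques in $\mathcal{S}$ are edge-disjoint, no edge is counted twice in $\bigcup_T E(T)$, so
\[ e(\mathcal{S}) = \sum_{T \in \mathcal{S}} \binom{|T|}{2} = \frac{1}{2}\sum_{T \in \mathcal{S}} |T|^2 - \frac{1}{2}\sum_{T \in \mathcal{S}} |T|, \]
while by the definition of $v(\mathcal{S})$ we simply have $\sum_{T \in \mathcal{S}} |T| = v(\mathcal{S})$.

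Next I would apply the Cauchy--Schwarz inequality (equivalently, convexity of $x \mapsto x^2$) to the $|\mathcal{S}|$ numbers $\{|T|\}_{T \in \mathcal{S}}$ to get
\[ \sum_{T \in \mathcal{S}} |T|^2 \;\geq\; \frac{\bigl(\sum_{T \in \mathcal{S}} |T|\bigr)^2}{|\mathcal{S}|} \;=\; \frac{v(\mathcal{S})^2}{|\mathcal{S}|}. \]
Substituting into the expression for $e(\mathcal{S})$ yields the intermediate bound
\[ e(\mathcal{S}) \;\geq\; \frac{v(\mathcal{S})^2}{2|\mathcal{S}|} - \frac{v(\mathcal{S})}{2}. \]

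Finally, I would use the hypothesis $v(\mathcal{S}) \geq 2|\mathcal{S}|$, which is precisely what is needed to absorb the linear correction term. Indeed, the inequality $\frac{v(\mathcal{S})^2}{2|\mathcal{S}|} - \frac{v(\mathcal{S})}{2} \geq \frac{v(\mathcal{S})^2}{4|\mathcal{S}|}$ is equivalent, after multiplying through by $4|\mathcal{S}|/v(\mathcal{S})$, to $v(\mathcal{S}) \geq 2|\mathcal{S}|$. Combining this with the previous display gives the desired bound $e(\mathcal{S}) \geq v(\mathcal{S})^2/(4|\mathcal{S}|)$.

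There is no real obstacle here: the argument is essentially a single Cauchy--Schwarz application, with the only subtlety being that the hypothesis $v(\mathcal{S}) \geq 2|\mathcal{S}|$ is exactly the threshold at which the quadratic term dominates the linear correction coming from $\binom{|T|}{2}$ versus $|T|^2/2$. The edge-disjointness is used solely to validate the identity $e(\mathcal{S}) = \sum_T \binom{|T|}{2}$; without it, one would only have an upper bound in that direction, rendering the argument invalid.
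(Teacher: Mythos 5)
Your proof is correct and is essentially the paper's argument: the paper applies Jensen's inequality to the convex function $x \mapsto \binom{x}{2}$, which yields exactly your intermediate bound $e(\mathcal{S}) \geq \frac{v(\mathcal{S})(v(\mathcal{S}) - |\mathcal{S}|)}{2|\mathcal{S}|}$, and then absorbs the linear term using $v(\mathcal{S}) \geq 2|\mathcal{S}|$ just as you do. Your Cauchy--Schwarz phrasing is an equivalent form of the same convexity step.
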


\begin{proof} 
Since the cliques in $\mathcal{S}$ are edge-disjoint, 
\[ e(\mathcal{S}) = \sum_{T \in \mathcal{S}} {|V(T)| \choose 2}.\]
Thus, by Jensen's Inequality, 
\[ e(\mathcal{S}) \geq |\mathcal{S}| \cdot {\frac{1}{|\mathcal{S}|} \sum_{T \in \mathcal{S}} |V(T)| \choose 2} = |\mathcal{S}| \cdot {\frac{v(\mathcal{S})}{|\mathcal{S}|} \choose 2} = \frac{v(\mathcal{S})(v(\mathcal{S}) - |\mathcal{S}|)}{2|\mathcal{S}|} \geq \frac{v(\mathcal{S})^2}{4|\mathcal{S}|},\]
where we used that $v(\mathcal{S}) \geq 2|\mathcal{S}|$.  
\end{proof}

\bigskip

For positive integers $m \leq n$ and $a \leq b$, an {\em $(m,n,a,b)$-clique graph} is an $n$-vertex graph consisting of an edge-disjoint union of $m$ cliques of order $b$ such that each vertex is in exactly $a$ of the cliques. 
This is equivalent to the restriction of the {\em distance-two graph} of a $C_4$-free $(m,n,a,b)$-graph to the part of size $n$. 
The main input needed for the proof of Theorem~\ref{main} is the following result, which says that if we take a subgraph of an $(m,n,a,b)$-clique graph by taking a random spanning bipartite subgraph of each clique, then the resulting graph is well-distributed with some positive probability.

\begin{lemma}\label{mainlem}
Let $H$ be an $(m,n,a,b)$-clique graph with $a \geq 2^{12}(\log n)^3$ and let $H^*$ be obtained by independently taking a random spanning bipartite subgraph of each clique of $H$. Then, with positive probability, each $X \subseteq V(H^*)$ with $|X| \geq R := 2^{10}(m\log n)/a$ has
\begin{equation*}
 e(H^*[X]) \geq S := \frac{a^2}{2^{8}m}|X|^2.
 \end{equation*}
\end{lemma}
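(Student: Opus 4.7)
Write $e(H^*[X])=\sum_T Y_T$, where the sum is over the cliques $T$ of $H$ and $Y_T$ counts the edges of $H^*$ lying inside $X \cap V(T)$. Since the random bipartitions of distinct cliques are independent, so are the $Y_T$; writing $k_T := |X \cap V(T)|$, each $Y_T$ has the distribution of $k_1(k_T - k_1)$ with $k_1 \sim \mathrm{Bin}(k_T, 1/2)$, and in particular $\mathbb{E}[Y_T] = k_T(k_T-1)/4$. Applying Lemma~\ref{cliques} to the multiset of cliques restricted to $X$, where $v(\mathcal{S}) = \sum_T k_T = a|X|$ and $|\mathcal{S}| \leq m$ (the hypothesis $v(\mathcal{S}) \geq 2|\mathcal{S}|$ being ensured by $|X| \geq R \geq 2m/a$), gives $\sum_T \binom{k_T}{2} \geq a^2|X|^2/(4m)$, and hence $\mu := \mathbb{E}[e(H^*[X])] \geq a^2|X|^2/(8m) = 32 S$.

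The core task is then to prove, for each fixed $X$ with $|X|\geq R$, a tail bound of the form $\Pr[e(H^*[X]) < S] \leq \exp(-c\,a^2|X|^2/(m(\log n)^2))$ for a universal constant $c > 0$. Given this, multiplying by $\binom{n}{x} \leq \exp(x\log n)$ and summing over $x \geq R$ produces a convergent geometric series, precisely because the assumption $a \geq 2^{12}(\log n)^3$ forces the exponent $c a^2x^2/(m(\log n)^2)$ to dominate $x\log n$ throughout $x \geq R$; the first-moment method then delivers an $H^*$ with the desired property. To obtain this tail bound, I split the cliques meeting $X$ into \emph{light} ones with $k_T \leq \kappa := 16\log n$ and \emph{heavy} ones with $k_T > \kappa$, writing $\mu = \mu_l + \mu_h$; since $\mu \geq 32S$, at least one of $\mu_l$ or $\mu_h$ is $\geq 16S$. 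In the light case, each summand is bounded by $\binom{\kappa}{2} = O((\log n)^2)$ with total variance $\mu_l/2$, so Bernstein's inequality yields $\Pr[\sum_{T\text{ light}} Y_T \leq \mu_l/2] \leq \exp(-c\mu_l/(\log n)^2) \leq \exp(-c'a^2|X|^2/(m(\log n)^2))$, which is exactly the target. In the heavy case, a Chernoff bound on each heavy clique's random bipartition gives $Y_T \geq k_T^2/16$ with probability at least $1 - 2e^{-k_T/8} \geq 1 - 2/n^2$; letting $L_X := \sum_{T\text{ heavy},\, Y_T < k_T^2/16} k_T^2/16$ denote the lost contribution, one has $e(H^*[X]) \geq \mu_h/4 - L_X \geq S$ whenever $L_X \leq 3S$.

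The main obstacle is controlling $L_X$ in the heavy case. A single bad heavy clique can contribute as much as $b^2/16$ to $L_X$, which is potentially much larger than $S$, so a direct Bernstein bound on $L_X$ with the worst-case per-term size gives only a constant-order exponent, far too weak to survive the union bound over $X$. The remedy I propose is a dyadic decomposition of the heavy range into intervals $[2^i\kappa, 2^{i+1}\kappa)$ combined with an exponential-moment calculation on each range: in the $i$-th range the worst-case per-term size grows only polynomially in $2^i$, while the per-clique failure probability $2e^{-k_T/8}$ drops doubly exponentially in $i$, so each range contributes a tail at most $\exp(-ca^2|X|^2/(m(\log n)^2))$, and summing over the $O(\log n)$ ranges preserves the bound.
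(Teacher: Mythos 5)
Your reduction to a per-$X$ tail bound, the use of Lemma~\ref{cliques} to lower bound the expectation, and the Bernstein argument for the light cliques are all sound and broadly parallel to the paper, which also splits the cliques meeting $X$ into dyadic classes according to $|V(T)\cap X|$. The gap is in the heavy case, and it is not just a missing computation: the tail bound you claim there is false. Your accounting charges a loss of $k_T^2/16$ whenever a heavy clique fails to be individually balanced on $X$, an event whose probability is $e^{-\Theta(k_T)}$ and no smaller. As soon as some clique has $k_T^2/16 > 3S$, which at $|X| = R$ means $k_T \gtrsim 2^{8}\sqrt{m}\log n$, a single failure already forces $L_X > 3S$, so $\Pr[L_X > 3S] \ge e^{-\Theta(k_T)} \ge e^{-\Theta(|X|)}$. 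This is vastly larger than your target $\exp(-c\,a^2|X|^2/(m(\log n)^2))$, which at $|X|=R$ equals $\exp(-c\,2^{10}a|X|/\log n) \le \exp(-c\,2^{22}|X|(\log n)^2)$, and it also cannot survive the union bound against $\binom{n}{|X|} = e^{\Theta(|X|\log n)}$. No dyadic splitting or exponential-moment computation can repair this, because $e^{-\Theta(|X|)}$ is a genuine lower bound on $\Pr[L_X > 3S]$, a fact about the random variable $L_X$ itself. Such configurations occur in the intended applications: for the $(q^3+1,\,q^4-q^3+q^2,\,q+1,\,q^2)$ graph, take $X$ of size $R$ containing all of $V(T_0)$ for one clique $T_0$; then $k_{T_0}=b=q^2 \gg \sqrt{m}\log n$ and $\mu_h \ge q^4/4 \gg 16S$ for large $q$, so your heavy case is invoked and its claimed estimate fails.

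The missing idea, and what the paper does instead, is to abandon per-clique success events for large cliques and concentrate the \emph{aggregate} edge count of a dyadic class. The paper pigeonholes to find one class $\mathcal{T}_i$ (cliques with $2^{i-1}a < |V(T)\cap X| \le 2^i a$) carrying at least a $1/(2\log n)$ fraction of the $a|X|$ incidences; Lemma~\ref{cliques} applied to that class gives $e(\mathcal{T}_i) \gtrsim 2^i a\, S/(\log n)^3$, so the available edges grow linearly with the clique scale; and Hoeffding--Azuma applied to the total number of surviving edges in the class, with martingale increments bounded by $2^i a$, gives an exponent $e(\mathcal{T}_i)/(2^{i+6}a) \gtrsim S/(\log n)^3$ that is uniform in $i$ and beats $|X|\log n$ precisely because $a \ge 2^{12}(\log n)^3$. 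The point your write-up misses is that a class made of very large cliques necessarily contains far more than $S$ edges in expectation, and one must spend that surplus: the deviation to be ruled out is then a constant fraction of the class's (large) expectation rather than of the fixed budget $S$, which is what makes the exponent scale like $a|X|/\log n$ instead of degrading with $k_{\max}$.
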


\begin{proof}
By a standard sampling argument (as, for instance, in~\cite[Section 3.2]{MattheusV}), it suffices to prove the result for all $X$ of order exactly $R$. Let $\mathcal{T}$ denote the collection of $m$ cliques of order $b$ which comprise $H$. For a given set $X$ of order $R$, we consider the multiset of cliques $$\mathcal{T}_X = \{H[V(T) \cap X] : |V(T) \cap X| \ge 1, T \in \mathcal{T}\}$$
and, for $1\leq i \leq \log n$, we set
\begin{eqnarray*}
\mathcal{T}_i = \{T \in \mathcal{T}_X : 2^{i-1} a < |V(T)| \leq 2^{i}a\} \quad \mbox{ and } \quad \mathcal{T}_0 = \{T \in \mathcal{T}_X : 1 \leq |V(T)| \leq a\}.
\end{eqnarray*}

Then $v(\mathcal{T}_X) = v(\mathcal{T}_0) + v(\mathcal{T}_1) + \cdots$ and $v(\mathcal{T}_X) = 
a|X|$. We now split into two cases.

\medskip

{\bf Case 1:} $v(\mathcal{T}_0) \geq a|X|/2$. Since $a|X|/2 \geq 2m \geq 2 |\mathcal{T}_0|$, we can apply Lemma \ref{cliques} with $\mathcal{S} = \mathcal{T}_0$ to obtain 
\begin{equation*}
e(\mathcal{T}_0) \geq \frac{a^2|X|^2}{16m} \geq 4S.
\end{equation*}
Let $Z_0$ be the number of edges of $H^*$ in cliques from $\mathcal{T}_0$, noting that $\mathbb{E}(Z_0) \geq \frac 12 e(\mathcal{T}_0)$. Then, as in~\cite[Section 3.2]{MattheusV}, 
$Z_0$ may be viewed as the final term of a martingale. Explicitly, for each clique $T \in \mathcal{T}_0$ and each $v \in V(T)$, we define a random variable $Z_{v, T}$ taking values $0$ or $1$, each with probability $1/2$, to indicate which side of the random cut of $T$ the vertex $v$ is in. Then, given an ordering $Y_1, \dots, Y_k$ with $k = v(\mathcal{T}_0)$ of all such random variables, we let $\mathcal{F}_j$ be the $\sigma$-algebra generated by $Y_1, \dots, Y_j$ and consider the (Doob) martingale $Z_{0,j} = \mathbb{E}(Z_0 | \mathcal{F}_j)$. Since $Z_0$ is a function of the $Z_{v,T}$, we have that $Z_0 = Z_{0,k}$. Hence, as fixing the placement of a given vertex in a cut of a given clique $T$ can change the expectation by at most $|V(T)| - 1$, we
may apply the Hoeffding--Azuma inequality to conclude that 
\[
\mathbb P(Z_0 < S) \le \mathbb{P}(Z_0 - \mathbb{E}(Z_0) < - \frac 14 e(\mathcal{T}_0)) \leq \exp\left(-\frac{e(\mathcal{T}_0)^2}{32 \sum_{T \in \mathcal{T}_0} \sum_{v \in T} (|V(T)| - 1)^2}\right).\] 
But 
\[\sum_{T \in \mathcal{T}_0} \sum_{v \in T} (|V(T)| - 1)^2 = \sum_{T \in \mathcal{T}_0} |V(T)| (|V(T)| - 1)^2 \leq 2a \sum_{T \in \mathcal{T}_0} {|V(T)| \choose 2} = 2a \cdot e(\mathcal{T}_0).\]
Therefore,
\[\mathbb P(Z_0 < S) \leq \exp\left(-\frac{e(\mathcal{T}_0)}{64 a}\right) \leq n^{-R} < \frac{1}{2{n \choose R}}. 
\]

\medskip

{\bf Case 2:} $v(\mathcal{T}_i) \geq a|X|/2\log n$ for some $1 \leq i \leq \log n$. In this case, we note that
\[ 2^{i - 1}a|\mathcal{T}_i| \leq v(\mathcal{T}_i) \leq a|X|\] 
and so $|\mathcal{T}_i| \leq 2^{11}m\log n/2^i a$. Since $a|X|/2\log n \geq 2 m \geq 2 |\mathcal{T}_i|$, we can apply Lemma \ref{cliques} with $\mathcal{S} = \mathcal{T}_i$ to obtain
\begin{equation*} 
e(\mathcal{T}_i) \geq \frac{a^2 |X|^2}{16 |\mathcal{T}_i| (\log n)^2} = \frac{16 m}{|\mathcal{T}_i| (\log n)^2} \cdot S >\frac{2^i a}{2^{8}(\log n)^3} \cdot S.
\end{equation*}
Let $Z_i$ be the number of edges of $H^*$ in cliques from $\mathcal{T}_i$.
Applying Hoeffding--Azuma as before, we get that
\begin{equation*} \mathbb P\left(Z_i < \frac{2^i a}{2^{10}(\log n)^3} \cdot S\right) \leq \mathbb{P}(Z_i - \mathbb{E}(Z_i) < - \frac 14 e(\mathcal{T}_i)) \leq \exp\left(-\frac{e(\mathcal{T}_i)}{2^{i+6} a}\right) \leq n^{-R} < \frac{1}{2{n \choose R}}. 
\end{equation*}

Combining the two cases and using that $a \geq 2^{12}(\log n)^3$, we see that, with positive probability, $e(H^*[X]) \geq S$ for all $X$ of order $R$.
\end{proof}

\medskip
\bigskip

The second ingredient needed for the proof of Theorem \ref{main} is the following result of Kohayakawa, Lee, R\"{o}dl and Samotij~\cite{KLRS}, which is itself proved using an early version of the hypergraph container method due to Kleitman and Winston \cite{KW} (see also the survey \cite{BMS18} for more on this method and its applications). In the statement, we write $e(X)$ for the number of edges induced by a vertex subset $X$ of a graph.

\begin{proposition}\label{count}
	Let $G$ be a graph on $n$ vertices, let $r,R \in \mathbb{N}$ and let $\delta \in [0,1]$ with
	$e^{-\delta r}n \leq R$ and, for every subset $X \subseteq V(G)$ of at least $R$ vertices,	$2e(X) \geq \delta|X|^2$. 
	Then the number of independent sets of size $t \geq r$ in $G$ is at most
	\begin{equation*}\label{eq:finalcount}
 {n \choose r} {R \choose t - r}.
 \end{equation*}
\end{proposition}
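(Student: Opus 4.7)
The plan is to use the standard Kleitman--Winston greedy algorithm to produce, for each independent set $I$ with $|I| = t \geq r$, a \emph{fingerprint} $A \subseteq I$ of size $r$ together with a \emph{container} $C = C(A) \supseteq I \setminus A$ of size at most $R$, where crucially $C$ depends only on $A$ and not on $I$. Once such an encoding is in place, $I$ is recovered from the pair $(A, I \setminus A)$, and summing gives
\[
\#\{I : |I|=t\} \;\leq\; \sum_{A \in \binom{V(G)}{r}} \binom{|C(A)|}{t-r} \;\leq\; \binom{n}{r}\binom{R}{t-r}.
\]

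To extract $(A,C)$ from $I$, fix any linear ordering $\pi$ of $V(G)$ and initialize $X_0 = V(G)$ and $A_0 = \emptyset$. At step $i \geq 1$, let $v_i$ be the vertex of $X_{i-1}$ of maximum degree in $G[X_{i-1}]$, breaking ties by $\pi$. If $v_i \in I$, ``accept'' it by setting $A_i = A_{i-1}\cup\{v_i\}$ and $X_i = X_{i-1} \setminus (\{v_i\} \cup N_G(v_i))$; otherwise ``reject'' it by setting $A_i = A_{i-1}$ and $X_i = X_{i-1} \setminus \{v_i\}$. Halt at the first step at which $|A_i| = r$, and take $A = A_i$ and $C = X_i$.

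Two facts then need to be verified. First, $|C| \leq R$: whenever $|X_{i-1}| \geq R$, the density hypothesis $2e(X_{i-1}) \geq \delta|X_{i-1}|^2$ forces the maximum degree of $G[X_{i-1}]$ to be at least $\delta|X_{i-1}|$, so each accept (while $|X|$ is still above $R$) shrinks $X$ by a factor of at most $1-\delta \leq e^{-\delta}$; hence after the $r$ accepts performed, $|X| \leq n(1-\delta)^r \leq ne^{-\delta r} \leq R$ by the assumption on $r$. Second, and more delicately, $C$ is a function of $A$ alone: at each step $v_i$ is determined by $X_{i-1}$, and before termination the branching test ``$v_i \in I$?'' has the same answer as ``$v_i \in A$?'', since every accepted vertex is placed in $A$ by construction and no element of $I$ is ever rejected while the procedure is still running; so from $A$ alone one can replay the algorithm and reconstruct $C$. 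Moreover $I \setminus A \subseteq C$, because any element of $I$ that ever appears as some $v_i$ is accepted and hence lies in $A$. This recoverability-of-$C$-from-$A$ step is the one I expect to need the most care: it is precisely what produces the unordered $\binom{n}{r}$ in the final count rather than an ordered $n^r$, and it depends on verifying that the algorithm branches only on $A$-membership at every step before termination.
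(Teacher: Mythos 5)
Your argument is correct and is precisely the Kleitman--Winston encoding argument by which this proposition is proved in the cited source [KLRS]; the paper itself imports the result without proof, so there is nothing different to compare against. The only point worth making explicit is why the algorithm is guaranteed to reach $|A_i| = r$ at all: since $I$ is independent, a vertex of $I$ can only leave $X$ by being accepted, so if $X$ emptied out after fewer than $r$ accepts we would have $I = A$ with $|A| < r \leq |I|$, a contradiction --- this, together with the observation that if $|X|$ ever drops below $R$ before the $r$-th accept then $|C| \leq R$ holds trivially, tidies up the two small loose ends in your write-up.
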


We are now ready to prove our main theorem.

\medskip

{\bf Proof of Theorem \ref{main}.}
Let $G$ be an $\mathcal{L}(F)$-free $(m,n,a,b)$-graph and let $H$ be the $(m,n,a,b)$-clique graph of $G$, 
which is the restriction of the distance-two graph of $G$ to the part of $G$ of size $n$. Let $H^*$ be the subgraph of $H$ guaranteed by Lemma \ref{mainlem}, noting that $H^*$ is $F$-free since $G$ is $\mathcal{L}(F)$-free -- indeed, this is the motivation behind the definition of $\mathcal{L}(F)$. If we let $R = 2^{10}m(\log n)/a$ and $r = t/\log n$, then, by Lemma \ref{mainlem} and the fact that $t = 2^8 n (\log n)^2/ab$, the conditions of Proposition~\ref{count} are satisfied with $\delta = a^2/2^8m$. 
Therefore, by Proposition \ref{count}, the number of independent
sets of order $t$ in $H^*$ is at most
\[ {n \choose r} {R \choose t - r} \leq n^r \left(\frac{eR}{t}\right)^t \leq \left(\frac{2^{10} e^2 m \log n}{at}\right)^t.\]
We now randomly sample the vertices of $H^*$ independently with probability $at/(2^{13} m \log n)$ and then delete one vertex from each independent set of order $t$ that remains, noting that the expected number of remaining vertices is at least $atn/2^{13}m\log n - (e^2/8)^t = \Omega(atn/m\log n)$. If $V$ is such a set of vertices, then $H^*[V]$ contains no independent set of order $t$. Since $H^*[V]$ is also  
$F$-free, we conclude that
\[ r(F,t) = \Omega\left(\frac{atn}{m\log n}\right).\]
As $an = bm$, this is the same as the required bound. {\hfill{ }\vrule height10pt width5pt depth1pt}

\section{Proof of Theorem~\ref{cycles}}

%In this short section, 

We now give the short proof of Theorem~\ref{cycles}, our main result about cycle-complete Ramsey numbers.

\medskip

{\bf Proof of Theorem~\ref{cycles}.}
%We apply Theorem 1 to the sequence of graphs given in the assumptions. 
By assumption, we have an infinite sequence of $(m,n,a,b)$-graphs of girth larger than $4\ell+2$ with $c(mn)^\beta$ edges for some $c > 0$ and $a \leq b = a^\alpha$. Observe that if a bipartite graph has girth larger than $4\ell+2$, it is ${\cal L}(C_{2\ell+1})$-free. Since the number of edges is $na = mb$, we can write $a = cm^\beta n^{\beta-1}$ and $b = cm^{\beta-1}n^\beta$, which, in combination with $b = a^\alpha$, yields $m = c_1n^{(\alpha+\beta-\alpha\beta)/(\alpha\beta-\beta+1)}$ for some positive constant $c_1$. Using these expressions for $a$, $b$ and $m$, one can verify that, for some positive constant $c_2$, we have
\begin{align*} 
	t = 2^8\frac{n(\log n)^2}{ab} = c_2n^{\frac{\alpha-3\beta-\alpha\beta+2}{\alpha\beta-\beta+1}}(\log n)^2. \label{tvalue}
\end{align*}
But then, for some positive constants $c_3$ and $c_4$, we have
\begin{align*}
	\frac{bt}{\log n} &= \frac{(cm^{\beta-1}n^\beta)(c_2n^{\frac{\alpha-3\beta-\alpha\beta+2}{\alpha\beta-\beta+1}}(\log n)^2)}{\log n} \\
	&=c_3 n^{\frac{\alpha+\beta-\alpha\beta}{\alpha\beta-\beta+1}(\beta-1)}n^\beta n^{\frac{\alpha-3\beta-\alpha\beta+2}{\alpha\beta-\beta+1}}\log n \\
	&=c_3 n^{\frac{\alpha\beta-3\beta+2}{\alpha\beta-\beta+1}} \log n \\
	&\geq c_4 t^{\frac{\alpha\beta-3\beta+2}{\alpha-3\beta-\alpha\beta+2}} (\log t)^{\frac{\alpha+3\beta-3\alpha\beta-2}{\alpha-3\beta-\alpha\beta+2}},	
\end{align*}	
where in the second equality we used the expression for $m$ and in the last we used that $\log t = \Theta(\log n)$. Applying Theorem~\ref{main} therefore yields the required infinite sequence of values of $t$. {\hfill{ }\vrule height10pt width5pt depth1pt}

\section{Concluding remarks}

$\bullet$ Unfortunately, the methods of this note do not help with estimating $r(F,t)$ when $F$ is bipartite, since the graph $H^*$ constructed in the proof of Theorem~\ref{main} contains very large complete bipartite graphs. In particular, the order of magnitude of $r(C_4,t)$ is a wide open problem, with the best bounds~\cite{BK10, CLRZ} being $c_1 t^{3/2}/\log t \leq r(C_4,t) \leq c_2 t^2/\log^2 t$ for some constants $c_1,c_2 > 0$.

\bigskip

$\bullet$  
Though Theorem~\ref{main} is stated and proved under the assumption that there is an $\mathcal{L}(F)$-free bipartite graph which is regular of degree $a$ on one side $V$ of order $n$ and regular of degree $b$ on the other side $U$ of order $m$, we do not really need such a rigid assumption. For one thing, the proof does not depend at all on $U$ being regular and we can simply assume that $b$ is the average degree of $U$. Moreover, the proof is easily adapted to the case where $V$ has all degrees in the interval $(a/2, a]$, say. This gives us significant additional leeway in finding constructions to which we can apply Theorem~\ref{main}.

\bigskip

$\bullet$ 
We now say a little about the off-diagonal Ramsey numbers $r(s, t) := r(K_s, t)$, where we think of $s$ as fixed and $t$ tending to infinity. By adapting arguments of Janzer~\cite{Janzer}, one can show that, for any $s \geq 3$, the maximum number of edges in an $\mathcal{L}(K_s)$-free $m$ by $n$ bipartite graph is
\[ O(m^{\frac{s-1}{2s-3}}n^{\frac{2s-4}{2s-3}} + m + n).\]
If this is tight, that is, if there exists an infinite sequence of $\mathcal{L}(K_s)$-free $(m, n, a, b)$-graphs with $\Omega(m^{\frac{s-1}{2s-3}}n^{\frac{2s-4}{2s-3}})$ edges and $1 \leq \log_a b \leq \alpha_0$ for some $\alpha_0 \ge 1$, then an application of Theorem~\ref{main} implies that $r(s,t) = \tilde{\Omega}(t^{s - 1})$ along an infinite sequence of values of $t$. Surprisingly, unlike the case of odd cycles, the particular choice of $\alpha = \log_a b$ is largely irrelevant to this deduction. Indeed, this is why, in~\cite{MattheusV}, it is possible to start with an $\mathcal{L}(K_4)$-free $m$ by $n$ bipartite graph with $m = \Theta(n^{3/4})$ and still derive an almost tight bound for $r(4,t)$. 

\bigskip

$\bullet$ Lemma \ref{mainlem} shows that the graph $H^*$ is in a sense close to optimally pseudorandom:
the density of edges in $H^*$ is $p = \Theta(ab/n) = \Theta(a^2/m)$ and we have shown that, for some $c > 0$, every set $X$ of order at least $c(m\log n)/a$
contains $\Omega(p|X|^2)$ edges. With more technical effort, as in~\cite{MattheusV} for the case $F = K_4$, one can often improve this bound to an optimal result saying that every set $X$ of order at least $cm/a$ contains $\Omega(p|X|^2)$ edges. This in turn would allow us to save some of the missing logarithmic factors in Theorems~\ref{main}, \ref{c5c7} and~\ref{cycles}. However, as observed in~\cite{MattheusV}, the more significant barrier to obtaining the correct power of $\log t$ in these results lies within Proposition~\ref{count}, the container-type result of Kohayakawa, Lee, R\"{o}dl and Samotij~\cite{KLRS}. We refer the interested reader to~\cite{MattheusV} for more on this issue.

\end{document}